\documentclass{amsart}
\usepackage{tikz}
\usetikzlibrary{shapes, arrows}
\usepackage{amssymb}
\usepackage{hyperref}
\usepackage{tikz}

%SetFonts

%SetFonts
\newtheorem{theorem}{Theorem}[section]

\newtheorem{cor}[theorem]{Corollary}

\theoremstyle{definition}
\newtheorem{definition}[theorem]{Definition}
\newtheorem{example}[theorem]{Example}

\theoremstyle{remark}

\numberwithin{equation}{section}

% to typeset URLs, URIs, and DOIs
\usepackage{url}

\begin{document}
             % start of a contribution
%
\title{Vulnerability Measures and Zagreb Indices of Graphs }
\author{Sanju Vaidya}
\address{Math \& CS Department, Mercy University, 555 Broadway, Dobbs Ferry, NY 10522}
\curraddr{}
\email{SVaidya@mercy.edu}
\thanks{}

%    author two information
\author{Jeff Chang}
\address{Math \& CS Department, Mercy University, 555 Broadway, Dobbs Ferry, NY 10522}
\curraddr{}
\email{cchang4@mercy.edu}
\thanks{}

%    \subjclass is required.
\subjclass[2010]{Primary 05C07, 05C12, 05C35, 05C90}
%
%\titlerunning{Vulnerability Measures }  % abbreviated title (for running head)
%                                     also used for the TOC unless
%                                     \toctitle is used
%

%
%%%% list of authors for the TOC (use if author list has to be modified)

%%%% list of authors for the TOC (use if author list has to be modified)

\begin{abstract}
 This paper establishes sharp bounds for the vulnerability measures of closeness and generalized closeness in graphs and identifies graphs that attain these bounds. It further develops bounds incorporating Zagreb indices for triangle- and quadrangle-free graphs, yielding formulas for closeness and generalized closeness in such graphs with diameter at most 3. Moreover, using Zagreb indices, we derive bounds for trees and connected graphs with girth at least 7, which are attained by graphs with diameter at most 4. Finally, formulas for closeness and generalized closeness in specific trees are established using Zagreb indices.

\end{abstract}

\maketitle              % typeset the title of the contribution

\section{Introduction}
It is widely acknowledged that cybersecurity is of paramount importance in our daily lives. Over the past four decades, graph theory has provided numerous tools for analyzing communication networks. In this study, network processors are represented by vertices, and the connections between them are represented by edges. Various graph vulnerability measures are critical for assessing the stability and reliability of these communication networks. Dangalchev \cite{dangalchev2006residual, dangalchev2011residual} introduced novel vulnerability measures, closeness and generalized closeness, demonstrating their superior sensitivity compared to other measures such as graph toughness and integrity. For further research, please refer to \cite{aytac2011residual, berberler2017residual, aytacc2018network, aytacc2018closeness, dangalchev2023closeness, dangalchev2024link, golpek2024closeness, golpek2023computing, rupnik2025generalized}.

The primary research question focuses on determining the bounds for various graph vulnerability measures. This paper will derive upper and lower bounds for the vulnerability measures, closeness, and generalized closeness of graphs. We will demonstrate that these bounds are achieved by any graph with a diameter of at most 2.

Furthermore, we will determine bounds for triangle- and quadrangle-free graphs using Zagreb indices. These indices, initially introduced by Gutman-Trinajstic \cite{gutman1972graph} for chemical applications, have been extensively studied for over 30 years \cite{gutman2018beyond}. The first Zagreb index represents the sum of the squares of the vertex degrees, while the second Zagreb index represents the sum of the products of the degrees of adjacent vertex pairs. We will demonstrate that the bounds involving the first  Zagreb index are achieved by any triangle- and quadrangle-free graph with a diameter of at most 3. Moreover, utilizing the bounds established by Yamaguchi S. \cite{yamaguchi2008estimating}, we will derive an upper bound for closeness in terms of the number of vertices,  edges, and radius which is achieved by a Moore graph of diameter 2 and a cycle of length 6. In addition, we will determine bounds involving the first and second Zagreb indices for trees and connected graphs with girth at least 7. Further, we will show that these bounds are attained by such graphs of diameter at most 4. Additionally, we will derive formulas for the vulnerability measures, closeness, and generalized closeness for certain special trees $T(n, D)$, which were introduced by Furtula-Gutman-Ediz \cite{furtula2014difference} to analyze the difference between the first and second Zagreb indices.

Section 2 will review vulnerability measures and Zagreb indices of graphs. Section 3 will establish bounds for the vulnerability measures, and Section 4 will present a discussion and conclusion.

\section{Review of Graph Vulnerability Measures and Zagreb Indices}
This section provides a comprehensive review of graph vulnerability measures, including closeness, generalized closeness, and Zagreb indices. Let $G$ be a connected graph with $n$ vertices and $m$ edges. Let $V = V(G)$ denote the vertex set and $E = E(G)$ denote the edge set of graph $G$. Let $d_u$ represent the degree of vertex $u$, and $d(i, j)$ represent the distance between vertices $i$ and $j$. The eccentricity $e(v)$ of a vertex $v$ is defined as $e(v) = \max_{w\in V} d(v, w)$. The radius $r = r(G)$ and the diameter $d = d(G)$ are defined as $r = \min_{v \in V} e(v)$ and $d = \max_{v \in V} e(v)$, respectively. For any positive integer $k$, let $d(G, k)$ denote the number of pairs of vertices at distance $k$.

To evaluate the stability of complex networks, Dangalchev \cite{dangalchev2006residual} introduced the following vulnerability measure, closeness.

\begin{definition}
The closeness, $C(G)$, of a graph $G$ is defined as:
\begin{equation*}
C(G) = \sum_{i=1}^n C(i), \, C(i) = \sum_{j\not= i} \frac{1}{2^{d(i, j)}}
\end{equation*}
\end{definition}

Dangalchev \cite{dangalchev2011residual} extended the closeness measure to generalized closeness by utilizing a real number $\alpha$ such that $0 < \alpha < 1$ instead of $1/2$.

\begin{definition}
The generalized closeness, $GC(G)$, of a graph $G$ is defined as:
\begin{equation*}
GC(G) = \sum_{i=1}^n GC(i), \, GC(i) = \sum_{j\not= i} \alpha^{d(i, j)}, 0<\alpha<1
\end{equation*}
\end{definition}

Gutman and Trinajstic \cite{gutman1972graph} introduced the Zagreb indices, which are defined as follows.

\begin{definition}
The first and second Zagreb indices of a graph $G$ are defined as:
\begin{equation*}
M_1 = M_1(G) = \sum_{v\in V(G)} d_{v}^2 , M_2 = M_2(G) = \sum_{uv\in E(G)} d_ud_{v}
\end{equation*}
\end{definition}

Furtula, Gutman, and Ediz \cite{furtula2014difference} introduced the following reduced second Zagreb index.

\begin{definition}

The reduced second Zagreb index of a graph $G$ is defined as:

\begin{equation*}
RM_2 = RM_2(G) = \sum_{uv\in E(G)} (d_u - 1)(d_{v} - 1)
\end{equation*}

\end{definition}

The following result by Gutman \cite{gutman1997property} is necessary.

\begin{theorem}[Gutman]

Let $G$ be a connected acyclic graph with $n$ vertices and diameter $d$. Let $\phi$ be a graph invariant defined as $\phi = \phi(G) = \sum_{u < v} f(d(u, v))$, where $d(u, v)$ is the distance between vertices $u$ and $v$. If the function $f(x)$ monotonically decreases for $x \in [1, d]$, then $\phi(G) \ge \phi(P_n)$, where $P_n$ is the path with $n$ vertices.

\end{theorem}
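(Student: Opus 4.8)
The plan is to replace the pairwise sum by the distance distribution and then compare tail counts tree-by-tree against the path. Writing $d(G,k)$ for the number of vertex pairs at distance exactly $k$ as in Section~2, I first rewrite $\phi(G)=\sum_{k=1}^{n-1} f(k)\,d(G,k)$, valid because every distance in a tree on $n$ vertices lies in $[1,n-1]$ (so I take $f$ decreasing on this full range, which is where it must be compared against $P_n$). I then introduce the tail counts $S_k(G)=\sum_{j\ge k} d(G,j)$, the number of pairs at distance at least $k$, observing that $S_1(G)=\binom{n}{2}$ for every such tree and $S_n(G)=0$.

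The first step is a summation by parts. Using $d(G,k)=S_k(G)-S_{k+1}(G)$ I would arrive at
\begin{equation*}
\phi(G)=f(1)\binom{n}{2}+\sum_{k=2}^{n-1}\bigl(f(k)-f(k-1)\bigr)\,S_k(G).
\end{equation*}
The leading term is the same for $G$ and for $P_n$, while each coefficient $f(k)-f(k-1)$ is $\le 0$ because $f$ is decreasing. Hence $\phi$ is made as small as possible by making every tail $S_k$ as large as possible, and the whole statement reduces to one combinatorial claim: the path maximizes these tails, $S_k(P_n)\ge S_k(G)$ for all $k$ and all trees $G$ on $n$ vertices.

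The heart of the argument is this tail inequality, which I would prove by induction on $n$ through leaf deletion. A direct count on the path gives $d(P_n,j)=n-j$, hence $S_k(P_n)=\binom{n-k+1}{2}$, so it suffices to show $S_k(G)\le\binom{n-k+1}{2}$ for an arbitrary tree $G$. Choosing a leaf $\ell$ and setting $G'=G-\ell$, distances among vertices of $G'$ are unchanged (no shortest path between them runs through a leaf), so $S_k(G)=S_k(G')+\bigl|\{v\ne\ell: d(\ell,v)\ge k\}\bigr|$. The inductive hypothesis yields $S_k(G')\le\binom{n-k}{2}$, and the key estimate is that at most $n-k$ vertices lie at distance $\ge k$ from $\ell$: a vertex at distance $\ge k$ forces at least one vertex at each of the distances $1,\dots,k-1$ along the path to it, so at least $k-1$ of the other $n-1$ vertices are strictly nearer. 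Then $\binom{n-k}{2}+(n-k)=\binom{n-k+1}{2}$ closes the induction.

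I expect the real obstacle to be making this leaf-deletion estimate watertight across the degenerate ranges — for instance when $k>n-1$, or when $\ell$ has no vertex at distance $\ge k$ — so that the bound $n-k$ holds uniformly and the base case $k=1$ (where $S_1=\binom{n}{2}$ gives equality) is consistent. Once $S_k(P_n)\ge S_k(G)$ is established, the sign of each coefficient in the Abel expansion immediately flips the inequality the right way to give $\phi(P_n)\le\phi(G)$; tracking the inequalities further shows equality forces $G=P_n$ when $f$ is strictly decreasing.
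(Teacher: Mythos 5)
Your proof is correct, but there is no in-paper proof to compare it against: the paper states this result (its Theorem 2.5) as an imported fact from Gutman \cite{gutman1997property} and never proves it, using it only with $f(x)=\alpha^x$ to conclude $GC(P_n)\le GC(G)$ for trees. Judged on its own terms, your argument is sound and complete. The Abel summation $\phi(G)=f(1)\binom{n}{2}+\sum_{k=2}^{n-1}\bigl(f(k)-f(k-1)\bigr)S_k(G)$ is exact (the boundary term vanishes because $S_n(G)=0$), each coefficient is $\le 0$, and your leaf-deletion induction correctly gives the key tail bound $S_k(G)\le\binom{n-k+1}{2}=S_k(P_n)$: deleting a leaf $\ell$ of a tree changes no other distances, the pairs involving $\ell$ at distance $\ge k$ number at most $n-k$ since the path from $\ell$ to any such vertex exhibits $k-1$ distinct vertices at distances $1,\dots,k-1$, and Pascal's rule $\binom{n-k}{2}+(n-k)=\binom{n-k+1}{2}$ closes the induction; the degenerate cases ($k=1$, or no vertex at distance $\ge k$ from $\ell$) hold trivially. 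Two refinements are worth making explicit. First, the induction is on $n$ (base $n\le 2$), proved for all $k$ simultaneously; your phrase ``base case $k=1$'' conflates the induction variable with a consistency check of the bound. Second, your decision to require $f$ decreasing on all of $[1,n-1]$ rather than only $[1,d]$ is not a convenience but a necessary repair of the statement: with monotonicity only on $[1,d]$ the conclusion can fail (take $G$ a star, so $d=2$, with $f(1)=2$, $f(2)=1$, and $f$ very large at arguments $\ge 3$; then $\phi(P_n)>\phi(G)$), so the hypothesis must be read as monotonicity over the range of distances occurring in $P_n$ --- automatic for the paper's application $f(x)=\alpha^x$, which is decreasing on all of $[1,\infty)$.
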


The following result of Gutman and Das \cite{gutman2004first} is also required.

\begin{theorem}

Let $G$ be a triangle- and quadrangle-free graph with $n$ vertices and $m$ edges. Let $M_1$ be the first Zagreb index of the graph $G$. Then for the number of pairs of vertices at distance 2, $d(G,2)$, we have:

\begin{equation*}
d(G, 2)= 0.5M_1 - m
\end{equation*}

\end{theorem}

The following result of Yamaguchi S. \cite{yamaguchi2008estimating} is also required. The bounds in the result are achieved by any Moore graph of diameter 2. Examples of Moore graphs of diameter 2 from \cite{hoffman1960moore} are the pentagon and Petersen graph.

\begin{theorem}\label{T:Yamaguchi}

Let $G$ be a triangle- and quadrangle-free connected graph with $n$ vertices and radius $r$. Let $M_1$ be the first Zagreb index of graph $G$. Then
\begin{equation*}
M_1(G) \le n(n + 1 - r)
\end{equation*}
and equality holds if and only if graph $G$ is a Moore graph of diameter 2 or is isomorphic to $C_6$, where $C_6$ is a cycle of length 6.

\end{theorem}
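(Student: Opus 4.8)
The plan is to rewrite the first Zagreb index as a count of vertex pairs sorted by distance and then to extract the radius through a short geodesic argument. Because $G$ is triangle- and quadrangle-free, the theorem of Gutman and Das quoted above gives $d(G,2) = 0.5 M_1 - m$, equivalently $M_1 = 2\bigl(d(G,1) + d(G,2)\bigr)$ since $d(G,1) = m$. I would then partition all $\binom{n}{2}$ vertex pairs according to their distance: writing $P = \sum_{k \ge 3} d(G,k)$ for the number of pairs at distance at least $3$, one has $d(G,1) + d(G,2) = \binom{n}{2} - P$, so that $M_1 = n(n-1) - 2P$. The whole inequality thus reduces to the lower bound $P \ge \tfrac12 n(r-2)$. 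This comes directly from the radius: every vertex $v$ has $e(v) \ge r$, and a geodesic from $v$ to an eccentric vertex passes through at least one vertex at each of the distances $3, 4, \dots, e(v)$, so $v$ has at least $e(v) - 2 \ge r - 2$ vertices at distance $\ge 3$. Summing over all $v$ and dividing by $2$ (each far pair is counted twice) gives $P \ge \tfrac12 n(r-2)$, and substituting yields $M_1 \le n(n-1) - n(r-2) = n(n+1-r)$.

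The equality analysis is the substantial part. Equality forces every inequality above to be tight, so each vertex must have eccentricity exactly $r$ (whence $G$ is self-centred with $d = r$) and, for every $k$ with $3 \le k \le r$, exactly one vertex at distance $k$. I would first eliminate $r \ge 4$. Fix a vertex $v$ and its unique antipode $a$ at distance $r$; since $r - 1 \ge 3$, the thinness condition applied at $a$ provides a unique vertex at distance $r-1$ from $a$ and the unique vertex $v$ at distance $r$. Any neighbour $u$ of $v$ satisfies $d(u,a) \ge r - 1$, hence is one of these two vertices, and as $u \ne v$ it is determined; thus $v$ has a single neighbour. This makes $G$ a $1$-regular connected graph on $n \ge 5$ vertices, which is impossible, so $r \le 3$.

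It remains to treat $r = 2$ and $r = 3$. When $r = 2$, equality merely says $G$ is triangle- and quadrangle-free of diameter $2$; for vertices $u, v$ at distance $2$ with unique common neighbour $w$, sending each neighbour $u' \ne w$ of $u$ to the unique common neighbour of $u'$ and $v$ is a bijection $N(u) \setminus \{w\} \to N(v) \setminus \{w\}$ by quadrangle-freeness, so $d_u = d_v$, and this propagates through $G$ to force regularity; the Moore count $n = 1 + k + k(k-1) = k^2 + 1$ from a vertex of degree $k$ then identifies $G$ as a Moore graph of diameter $2$. When $r = 3$, the antipode map is a fixed-point-free involution, so $n$ is even, and the thinness condition gives $\sum_{u \sim v} d_u = n - 2$ for every $v$; combined with quadrangle-freeness and the antipodal matching this should force $G$ to be $2$-regular, hence a cycle, and the demand of exactly one vertex at distance $3$ selects $C_6$ while excluding $C_7$ (each of whose vertices has two vertices at distance $3$). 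I expect this final regularity step for $r = 3$ to be the main obstacle: in contrast with the $r \ge 4$ case, a neighbour of $v$ may lie at distance $2$ from its antipode, so collapsing all degrees to $2$ requires a careful double count of the distance-two shells against the antipodal matching rather than the one-line argument available for large radius.
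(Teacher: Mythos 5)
The paper never proves this statement: it is quoted verbatim as Yamaguchi's theorem with a citation, so there is no internal proof to compare against and your attempt has to stand on its own. The inequality half of your argument does stand: the identity $M_1 = n(n-1) - 2P$ (Gutman--Das plus connectedness) and the geodesic count $P \ge \tfrac12 n(r-2)$ are both correct, as is the observation that equality forces every vertex to have eccentricity exactly $r$ and exactly one vertex at each distance $k$ with $3 \le k \le r$. Your elimination of $r \ge 4$ via the unique antipode of the antipode is clean and correct. The $r=2$ case is Singleton's theorem (diameter $2$, girth $5$ implies Moore); your distance-$2$ bijection is the right tool, though ``propagates through $G$ to force regularity'' compresses a step that still needs an argument for adjacent vertices (one must exhibit, or handle the absence of, a vertex at distance $2$ from both ends of an edge), and the easy converse direction --- that Moore graphs and $C_6$ actually attain equality --- is never recorded.

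The genuine gap is the one you flag yourself: the $r=3$ case, and it is not closable by ``a careful double count of the distance-two shells against the antipodal matching.'' The reason is that everything you have derived at that point --- triangle- and quadrangle-freeness, the fixed-point-free antipodal involution $\sigma$, and $\sum_{u \sim v} d_u = n-2$ for every $v$ --- is satisfied verbatim by any hypothetical $k$-regular graph of girth $5$ on $n = k^2+2$ vertices in which each vertex has a unique antipode at distance $3$ (for $k$-regular graphs the identity reads $k^2 = n-2$). So your counting conditions cannot distinguish $C_6$ (the case $k=2$) from, say, a $4$-regular girth-$5$ graph on $18$ vertices with an antipodal pairing; excluding that object is equivalent to a cage-type nonexistence statement (the $(4,5)$-cage has $19$ vertices; more generally, girth-$5$ graphs with excess one over the Moore bound $k^2+1$ do not exist, a theorem of Brown), which no local double count delivers. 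A route that does work, but requires real additional structure, is: use quadrangle-freeness to build a perfect matching between $N(v)$ and $N(\sigma(v))$ (giving $d_v = d_{\sigma(v)}$); then show by a covering count that the antipode of a neighbour of $v$ must lie in $N(\sigma(v))$, so that $\sigma$ is a graph automorphism; then pass to the quotient $G/\sigma$, which turns out to be strongly regular with $\lambda = 1$, $\mu = 2$, and finally rule out girth-$5$ double covers of such quotients (for $k=4$ the quotient is the $3\times 3$ rook's graph and the relevant $\mathbb{Z}_2$-voltage constraints are inconsistent). Until something of this kind is carried out, the ``only if'' direction of the equality characterization --- which is the substantive content of the theorem --- remains unproven.
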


Note that the Wiener polarity index $W_p(G)$ is defined as the number of pairs of vertices at distance 3. We will need the following result of Liu M. Liu B \cite{liu2011wiener} regarding the Wiener Polarity index, Zagreb indices, and the girth $g(G)$ of a connected graph $G$ (the length of the shortest cycle in $G$),

\begin{theorem}

Let $G$ be a connected $(n,m)$ graph. Then
\begin{equation*}
W_P(G) \le M_2(G) - M_1(G) + m
\end{equation*}
with equality holding if and only if the graph $G$ is a tree or its girth, $g(G) \ge 7$.

\end{theorem}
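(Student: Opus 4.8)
The plan is to first rewrite the right-hand side in terms of the reduced second Zagreb index already introduced in this section, and then to compare a combinatorial count with $W_P(G)$. Using the standard identity $M_1(G)=\sum_{uv\in E(G)}(d_u+d_v)$ — each vertex $v$ contributes $d_v$ once per incident edge, hence $d_v^2$ in total — subtracting it from $M_2(G)=\sum_{uv\in E(G)}d_ud_v$ and adding $m=\sum_{uv\in E(G)}1$ collapses every edge term to $(d_ud_v-d_u-d_v+1)=(d_u-1)(d_v-1)$, giving
\[
M_2(G)-M_1(G)+m=\sum_{uv\in E(G)}(d_u-1)(d_v-1)=RM_2(G).
\]
So the assertion is equivalent to $W_P(G)\le RM_2(G)$ with the stated equality condition, and the problem reduces to counting walks of length three.

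Next I would read $RM_2(G)$ combinatorially. For a fixed edge $uv$, the factor $d_u-1$ counts the neighbors of $u$ other than $v$ and $d_v-1$ counts the neighbors of $v$ other than $u$, so $(d_u-1)(d_v-1)$ counts the walks $a-u-v-b$ with $a\neq v$ and $b\neq u$, recorded with $uv$ as the central edge. Summing over all edges and sorting these walks according to whether the two ends $a,b$ are distinct, I expect
\[
RM_2(G)=N_{P_3}(G)+3\,t(G),
\]
where $N_{P_3}(G)$ is the number of length-three paths (paths on four vertices) and $t(G)$ the number of triangles; the coefficient $3$ appears because a triangle is counted once for each of its three edges acting as the central edge. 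I would confirm this on the small cases $P_4$ and $K_4$ before proceeding.

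The inequality then drops out by comparing $N_{P_3}(G)$ with $W_P(G)$. Writing $p_3(a,b)$ for the number of length-three paths joining $a$ and $b$, every pair at distance $3$ is joined by at least one geodesic, which is such a path, so
\[
W_P(G)=\sum_{d(a,b)=3}1\ \le\ \sum_{d(a,b)=3}p_3(a,b)\ \le\ \sum_{\{a,b\}}p_3(a,b)=N_{P_3}(G)\ \le\ RM_2(G).
\]

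The delicate part is the equality analysis, where all three of these inequalities must be tight simultaneously. I would show that equality forces $t(G)=0$ (no triangles), no length-three path between a pair at distance $1$ (such a path together with the joining edge is a $4$-cycle) or at distance $2$ (it yields a $5$-cycle), and a \emph{unique} length-three path for each distance-$3$ pair (two distinct ones close up into a cycle of length $4$, $5$, or $6$). Conversely, any cycle of length $3,4,5,$ or $6$ realizes exactly one of these obstructions and therefore a strict inequality, whereas a tree has unique geodesics and no cycles, and a graph with $g(G)\ge 7$ has no cycle of length at most $6$; both thus satisfy all the conditions. I expect the main obstacle to lie precisely here: verifying that two distinct length-three geodesics between one pair must create a cycle of length $\le 6$, that cycles of lengths $4$, $5$, and $6$ each produce the claimed surplus of walks over distance-$3$ pairs, and that the ordered/unordered counting conventions are applied consistently throughout.
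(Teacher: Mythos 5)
The paper itself does not prove this statement: it is quoted as a known result of Liu and Liu (cited as \cite{liu2011wiener}) and then invoked later in the proof of Theorem \ref{T:Zagreb}, so there is no in-paper proof to compare against. Your proposal is a correct, self-contained argument. The identity $M_2(G)-M_1(G)+m=\sum_{uv\in E(G)}(d_u-1)(d_v-1)=RM_2(G)$ is right, as is the combinatorial reading $RM_2(G)=N_{P_3}(G)+3\,t(G)$: for a fixed edge $uv$, a pair $(a,b)$ with $a\in N(u)\setminus\{v\}$ and $b\in N(v)\setminus\{u\}$ degenerates exactly when $a=b$, i.e.\ when $a$ is a common neighbor, and each triangle is so counted once per edge, while each path on four vertices is counted once via its unique central edge. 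The chain $W_P(G)\le\sum_{d(a,b)=3}p_3(a,b)\le N_{P_3}(G)\le RM_2(G)$ then gives the inequality, and your equality analysis is sound: tightness forces no triangles, no length-$3$ path between pairs at distance $1$ or $2$ (the former closes into a $4$-cycle; the latter closes into a $5$-cycle \emph{or} produces a triangle when the middle vertex of the length-$2$ geodesic coincides with an interior vertex of the path -- a case your no-triangle condition absorbs), and a unique length-$3$ path per distance-$3$ pair; two such paths between one distance-$3$ pair close into a $4$- or $6$-cycle (a $5$-cycle cannot arise in this last case, a small imprecision in your list, but harmless since everything of length at most $6$ is excluded). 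Conversely, a shortest cycle of length $3,4,5,6$ produces a strict surplus in exactly one of the three inequalities, while trees and graphs of girth at least $7$ satisfy all three tightness conditions. What your approach buys is an elementary, purely counting-based proof of a result the paper outsources to the literature; it is essentially the standard argument behind the Liu--Liu theorem, reconstructed from scratch.
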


\section{Bounds for closeness and generalized closeness}

In this section, we will find upper and lower bounds for the vulnerability measures, closeness and generalized closeness, of graphs. 

\begin{theorem}

Let G be a graph with n vertices, m edges, and diameter d. Then for the generalized closeness, GC(G), and for the closeness, C(G),we have

\begin{equation*}
\frac{2[n\alpha(1-\alpha)-\alpha(1-\alpha^n)]}{(1-\alpha)^2} \le GC(G) \le \alpha n(n-1).
\end{equation*}

\begin{equation*}
2n - 4 +(0.5)^{n-2} \le C(G) \le \frac {n(n-1)}{2}.
\end{equation*}

Moreover, for each formula, the left equality occurs if G is a path, $P_n$, and the right equality occurs if G is a complete graph. 

\end{theorem}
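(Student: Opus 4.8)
The plan is to handle both measures at once by symmetrizing. Since $d(i,j)=d(j,i)$, each measure is a sum over unordered pairs: writing $f(x)=2\alpha^x$ (and $f(x)=2(0.5)^x$ in the closeness case, i.e. $\alpha=\tfrac12$), we have $GC(G)=\sum_{i<j} f(d(i,j))$, so both are graph invariants of the Gutman form $\phi(G)=\sum_{u<v} f(d(u,v))$ with $f$ strictly decreasing on $[1,\infty)$. This single viewpoint lets the upper and lower bounds be proved in parallel, with the two explicit formulas differing only by the substitution $\alpha=0.5$.

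For the upper bounds I would use only that $G$ is connected, so $d(i,j)\ge 1$ for every pair of distinct vertices. Monotonicity of $f$ then gives $\alpha^{d(i,j)}\le\alpha$ and $(0.5)^{d(i,j)}\le 0.5$ termwise, and summing over all $n(n-1)$ ordered pairs yields $GC(G)\le \alpha n(n-1)$ and $C(G)\le \tfrac{n(n-1)}{2}$. Equality forces $d(i,j)=1$ for every distinct pair, which happens exactly when $G=K_n$, the complete graph. For the lower bounds the point is that the path, being the ``longest'' connected structure, maximizes distances and hence minimizes $\phi$. To make this rigorous for all connected graphs (Gutman's theorem is stated only for acyclic graphs), I would first pass to a spanning tree: choose any spanning tree $T$ of $G$; deleting edges cannot decrease distances, so $d_T(u,v)\ge d_G(u,v)$, and since $f$ is decreasing this gives $\phi(T)\le\phi(G)$. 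Gutman's theorem then supplies $\phi(T)\ge\phi(P_n)$, and chaining the two inequalities yields $\phi(G)\ge\phi(P_n)$ for every connected $G$.

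It then remains to evaluate $\phi(P_n)$ explicitly. In $P_n$ the number of vertex pairs at distance $k$ is $n-k$ for $1\le k\le n-1$, so $GC(P_n)=2\sum_{k=1}^{n-1}(n-k)\alpha^k$. I would split this as $n\sum_{k=1}^{n-1}\alpha^k-\sum_{k=1}^{n-1}k\alpha^k$ and apply the standard closed forms for the geometric series and for $\sum k\alpha^k$, then collect terms over the common denominator $(1-\alpha)^2$; the $\alpha^n$ contributions cancel cleanly, leaving exactly $\dfrac{2[n\alpha(1-\alpha)-\alpha(1-\alpha^n)]}{(1-\alpha)^2}$. Substituting $\alpha=\tfrac12$ and simplifying the powers of $2$ gives $C(P_n)=2n-4+(0.5)^{n-2}$, matching the stated bound. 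The main obstacle is this algebraic simplification of the arithmetic–geometric sum into the advertised closed form without sign or bookkeeping errors; the rest of the argument is the routine combination of termwise monotonicity with Gutman's theorem via the spanning-tree reduction.
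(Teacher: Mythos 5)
Your proof is correct and takes essentially the same route as the paper: the upper bound from $d(i,j)\ge 1$ (the paper phrases this as edge addition increasing $GC$, maximized by $K_n$), the lower bound via reduction to a tree followed by Gutman's theorem, and the evaluation of $GC(P_n)=2\sum_{k=1}^{n-1}(n-k)\alpha^k$ by standard geometric-sum manipulations. Your explicit spanning-tree step, $d_T(u,v)\ge d_G(u,v)$ hence $\phi(P_n)\le\phi(T)\le\phi(G)$, merely makes rigorous what the paper asserts informally (``adding an edge increases $GC$, so the minimum is attained at a tree''), and splitting the sum into $n\sum\alpha^k-\sum k\alpha^k$ rather than differentiating the geometric series is a cosmetic difference.
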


\begin{proof}
We have

\begin{equation*}
GC(G) = \sum_i \sum_ {j\not= i} \alpha^ {d(i,j)} = 2\sum_{i<j} \alpha^ {d(i,j)}.
\end{equation*}

If we add an edge to the graph $G$, then the distance $d(i,j)$ will decrease and the generalized closeness, $GC(G)$, will increase. 

So the generalized closeness, $GC(G)$, will be maximum if $G$ is a complete graph and is minimum if it is tree. If $G$ is a complete graph, then

$GC(G) = \frac{2n(n-1)\alpha}{2} = \alpha n(n-1)$. Additionally, by Theorem 2.5 of Gutman \cite{gutman1997property}, we have $GC(P_n) \le GC(G)$ if $G$ is a tree, since the function $f(x) = \alpha^x$ monotonically decreases for $x\in [1, d]$. Note that $GC(P_n) = 2\sum_{k=1}^{n-1} (n-k)\alpha^k$. To compute this sum, we note the following: $\sum_{i=0}^{n-1} x^i = \frac{x^n - 1}{x - 1}$. By differentiating, we get $\sum_{i=1}^{n-1} i x^{i-1} = \frac{(n-1)x^n - nx^{n-1} + 1}{(x - 1)^2}$. Let $x = \frac{1}{\alpha}$, multiply by $\alpha^{n-1}$, and simplify. This will yield the result for generalized closeness $GC(G)$. The result for closeness $C(G)$ follows from it by setting $\alpha = 0.5$. 

\end{proof}

Aytac and Odabas Berberler \cite{aytacc2018network} developed a formula for closeness when the graph's diameter is less than or equal to 2. In the following theorem, we will establish bounds for generalized closeness and closeness, which are achieved by graphs with diameters less than or equal to 2. We will obtain the same formula as presented in Theorem 4.1 of Aytac and Berberler \cite{aytacc2018network} for closeness if the diameter is less than or equal to 2.

\begin{theorem}

Let $G$ be a graph with $n$ vertices, $m$ edges, and diameter $d$. Then, for the generalized closeness, $GC(G)$, and for the closeness, $C(G)$, we have

\begin{equation*}
\alpha^d n(n-1) + 2m\alpha(1 - \alpha^{d-1}) \le GC(G) \le \alpha^2 n(n-1) + 2m\alpha(1 - \alpha).
\end{equation*}

\begin{equation*}
\frac {n(n-1)}{2^d} + m(1 - 0.5^{d-1}) \le C(G) \le \frac {n(n-1)+2m}{4}.
\end{equation*}
where for each formula, equality holds if the diameter $d$ is less than or equal to 2.

\end{theorem}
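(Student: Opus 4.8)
The plan is to bound the generalized closeness $GC(G) = 2\sum_{i<j}\alpha^{d(i,j)}$ by separating the contribution of vertex pairs according to their distance. Since $G$ has $n$ vertices and $m$ edges, there are exactly $m$ pairs at distance $1$, and the remaining $\binom{n}{2} - m$ pairs are at distance at least $2$. Because $0 < \alpha < 1$, the function $\alpha^{d(i,j)}$ is largest when the distance is smallest. So the first step is to observe that each pair contributing to the sum satisfies $\alpha^d \le \alpha^{d(i,j)} \le \alpha^2$ whenever the pair is \emph{not} adjacent (distance between $2$ and $d$), while adjacent pairs contribute exactly $\alpha$.

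For the upper bound, I would replace every non-adjacent pair's term $\alpha^{d(i,j)}$ by its maximum possible value $\alpha^2$ and keep the $m$ adjacent pairs at exactly $\alpha$. This gives
\begin{equation*}
GC(G) = 2\Bigl[m\alpha + \sum_{\substack{i<j\\ d(i,j)\ge 2}}\alpha^{d(i,j)}\Bigr] \le 2\bigl[m\alpha + (\tbinom{n}{2} - m)\alpha^2\bigr] = \alpha^2 n(n-1) + 2m\alpha(1-\alpha),
\end{equation*}
after simplifying $2\binom{n}{2}\alpha^2 = \alpha^2 n(n-1)$ and collecting the $m$ terms. For the lower bound, I would instead replace every non-adjacent pair's term by its minimum possible value $\alpha^d$, yielding $GC(G) \ge 2[m\alpha + (\binom{n}{2}-m)\alpha^d] = \alpha^d n(n-1) + 2m\alpha(1-\alpha^{d-1})$ after the analogous simplification.

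The equality analysis is the crux of the statement. When $d \le 2$, every pair of vertices is at distance exactly $1$ or exactly $2$, so there are no pairs at distance strictly between $2$ and $d$; in that case the non-adjacent pairs all contribute exactly $\alpha^2 = \alpha^d$, and both the upper and lower bounds collapse to the same exact value $\alpha^2 n(n-1) + 2m\alpha(1-\alpha)$. I would make explicit that this common value is precisely the formula of Aytac and Berberler referenced before the theorem, confirming consistency. The closeness bounds then follow immediately by substituting $\alpha = 0.5$ into both inequalities and simplifying $0.5^2 n(n-1)/1 = n(n-1)/4$ and $2m\cdot 0.5\cdot 0.5 = m/2$, so that the upper bound becomes $\frac{n(n-1)+2m}{4}$ and the lower bound becomes $\frac{n(n-1)}{2^d} + m(1-0.5^{d-1})$.

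The only genuine obstacle is bookkeeping rather than conceptual: one must be careful that the counting $\binom{n}{2} - m$ correctly enumerates all non-adjacent pairs and that the diameter $d$ is genuinely the maximal distance, so that $\alpha^d$ is a valid lower envelope for every non-adjacent term. I expect no difficulty beyond verifying these algebraic simplifications and stating the $d \le 2$ degeneracy cleanly; the monotonicity of $\alpha^x$ for $0<\alpha<1$ does all the real work.
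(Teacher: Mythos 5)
Your proposal is correct and follows essentially the same argument as the paper: both isolate the $m$ distance-one pairs (contributing $\alpha$ each) and bound every remaining pair's term between $\alpha^d$ and $\alpha^2$ using the monotonicity of $\alpha^x$, with equality collapsing when $d \le 2$. The only cosmetic difference is that you sum over unordered pairs directly, counting $\binom{n}{2}-m$ non-adjacent pairs, whereas the paper bounds each $GC(i)$ via the neighborhood $N_G(i)$ and then sums degrees to get $2m$; these are the same computation by the handshake lemma.
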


\begin{proof}

Let $V(G)$ be the set of all vertices of the graph $G$ and $N_G(i)$ be the set of all vertices in the neighborhood of vertex $i$.

We have

\begin{equation*}
GC(i) = \sum_{j\in V(G), j\not= i} \alpha^ {d(i,j)} = \sum_{j\in N_G(i), j\not=i} \alpha^ {d(i,j)} + \sum_{j\in V(G)\setminus N_G(i), j\not=i} \alpha^ {d(i,j)}
\end{equation*}

\begin{equation*}
GC(i) = \alpha deg_G(i) + \sum_{j\in V(G)\setminus N_G(i)} \alpha^ {d(i,j)}
\end{equation*}

\begin{equation*}
GC(i) \le \alpha deg_G(i) + \alpha^2(n - 1 - deg_G(i))
\end{equation*}

Now, the generalized closeness, $GC(G) = \sum_{i=1}^n GC(i)$. By substituting for $GC(i)$, we get

\begin{equation*}
GC(G) \le \sum_{i=1}^n \alpha deg_G(i) + \alpha^2(n - 1 - deg_G(i))
\end{equation*}

\begin{equation*}
GC(G) \le \sum_{i=1}^n \alpha^2(n - 1) + \alpha (1 - \alpha) deg_G(i)
\end{equation*}

\begin{equation*}
GC(G) \le \alpha^2 n(n - 1) + \alpha (1 - \alpha)(2m)
\end{equation*}
This yields the upper bound. To obtain the lower bound, note the following.

\begin{equation*}
GC(i) \ge \alpha deg_G(i) + \alpha^d(n - 1 - deg_G(i))
\end{equation*}

\begin{equation*}
GC(G) \ge \sum_{i=1}^n \alpha deg_G(i) + \alpha^d(n - 1 - deg_G(i))
\end{equation*}

\begin{equation*}
GC(G) \ge \sum_{i=1}^n \alpha^d(n - 1) + \alpha (1 - \alpha^{d-1}) deg_G(i)
\end{equation*}

\begin{equation*}
GC(G) \ge \alpha^d n(n - 1) + \alpha (1 - \alpha^{d-1})(2m)
\end{equation*}

This gives the lower bound. Furthermore, it is readily apparent that equality holds for both bounds if the diameter d is less than or equal to 2. This result for closeness, $C(G)$, follows by setting $\alpha = 0.5$.

\end{proof}

The following theorem establishes bounds for the generalized closeness, $GC(G)$, and closeness, $C(G)$, for any triangle- and quadrangle-free graph. These bounds yield formulas for $GC(G)$ and $C(G)$ for such graphs when the diameter d is less than or equal to 3.

\begin{theorem}\label{T:TQ}
Let G be a triangle- and quadrangle-free graph with n vertices, m edges, and diameter d. Let $M_1$ be the first Zagreb index of G. Let

\[L= \alpha^d( n(n-1)- M_1) + \alpha^2(M_1 - 2m) + 2m\alpha\]
and
\[U= \alpha^3( n(n-1)- M_1) + \alpha^2(M_1 - 2m) + 2m\alpha\]
Then, for the generalized closeness, GC(G), we have $L \le GC(G) \le U$ and for the closeness, C(G), we have

\[\frac{ n(n-1)- M_1}{2^d} + \frac{M_1 + 2m}{4} \le C(G) \le \frac{n(n - 1) + M_1 + 4m}{8}\]
where for each formula, equality holds if the diameter d is less than or equal to 3.

\end{theorem}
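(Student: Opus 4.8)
The plan is to decompose the generalized closeness by distance. As in the earlier proofs, write $GC(G) = 2\sum_{i<j}\alpha^{d(i,j)} = 2\sum_{k=1}^d d(G,k)\,\alpha^k$, where $d(G,k)$ denotes the number of vertex pairs at distance $k$. The terms with $k=1$ and $k=2$ can be evaluated exactly: $d(G,1)=m$, and since $G$ is triangle- and quadrangle-free, the Gutman--Das result (Theorem 2.6) gives $d(G,2)=\tfrac12 M_1-m$. Substituting, the contribution of the pairs at distance at most $2$ is
\[
2m\alpha+\bigl(M_1-2m\bigr)\alpha^2,
\]
which is precisely the common part appearing in both $L$ and $U$.

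Next I would count the pairs at distance at least $3$. Since the pairs at all distances sum to $\binom{n}{2}=\tfrac{n(n-1)}{2}$, the number of pairs at distance $\ge 3$ is
\[
\sum_{k=3}^{d} d(G,k)=\frac{n(n-1)}{2}-m-\Bigl(\frac{M_1}{2}-m\Bigr)=\frac{n(n-1)-M_1}{2}.
\]
Because $0<\alpha<1$, for every $k$ with $3\le k\le d$ we have $\alpha^d\le\alpha^k\le\alpha^3$, so multiplying this count by $\alpha^d$ and by $\alpha^3$ sandwiches the tail:
\[
\alpha^d\bigl(n(n-1)-M_1\bigr)\le 2\sum_{k=3}^{d} d(G,k)\,\alpha^k\le \alpha^3\bigl(n(n-1)-M_1\bigr).
\]
Adding the exact distance-at-most-$2$ contribution to all three members yields $L\le GC(G)\le U$, and the closeness bounds then follow by setting $\alpha=\tfrac12$: the factor $(1/2)^2=1/4$ on the $(M_1-2m)$ term combines with the $m$ produced by $2m\cdot\tfrac12$ to give $\tfrac{M_1+2m}{4}$ in the lower bound and $\tfrac{n(n-1)+M_1+4m}{8}$ in the upper bound.

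For the equality claim, when $d\le 3$ every pair at distance $\ge 3$ lies at distance exactly $3$, so the only surviving tail term is $k=3$, where $\alpha^k=\alpha^3=\alpha^d$ (and when $d\le 2$ the tail is empty); in either situation the outer members $L$ and $U$ coincide, forcing $GC(G)=L=U$.

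There is no serious obstacle here; the argument is a clean distance-distribution decomposition anchored on the Gutman--Das identity. The only steps demanding care are the correct bookkeeping of the factor $2$ relating $GC(G)$ to the sum over unordered pairs, and the recognition that $d(G,2)=\tfrac12 M_1-m$ counts the distance-$2$ pairs \emph{exactly} (which relies essentially on the quadrangle-free hypothesis, ensuring each such pair has a unique common neighbor). Verifying the equality conditions for $d\le 3$ is then routine.
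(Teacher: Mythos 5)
Your proposal is correct and follows essentially the same route as the paper's proof: both decompose $GC(G)$ by distance, evaluate the distance-$1$ and distance-$2$ contributions exactly via the Gutman--Das identity $d(G,2)=\tfrac12 M_1-m$, count the remaining pairs as $\tfrac{n(n-1)-M_1}{2}$, and sandwich the tail between $\alpha^d$ and $\alpha^3$ before setting $\alpha=\tfrac12$ for closeness. Your treatment of the equality case (including the observation that the tail is empty when $d\le 2$) is, if anything, slightly more explicit than the paper's.
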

\begin{proof}
For any positive integer $k$, let $d(G, k)$ denote the number of vertex pairs at distance $k$. Gutman and Das \cite{gutman2004first} demonstrated that $d(G, 2)= 0.5M_1 - m$. We utilize this to derive the bounds as follows:

\[GC(G) = 2[\alpha m + \alpha^2(0.5M_1 - m) + \sum_{i<j, d(i,j)\ge 3} \alpha^ {d(i,j)}]\]

Since $0 < \alpha < 1$, we obtain

\[GC(G) \ge 2[\alpha m + \alpha^2(0.5M_1 - m) + \alpha^d( 0.5n(n-1)- 0.5M_1)]\]

Simplifying yields the lower bound $L$. Similarly,

\[GC(G) \le 2[\alpha m + \alpha^2(0.5M_1 - m) + \alpha^3( 0.5n(n-1) - 0.5M_1)]\]

Simplifying yields the upper bound $U$. Equality holds for both bounds if the diameter $d$ is less than or equal to 3. The result for closeness follows by setting $\alpha = 0.5$.
 
\end{proof}

In the following corollary, we will establish a precise upper bound for the closeness of triangle- and quadrangle-free graphs. The bound will depend on the number of vertices, edges, and radius.

\begin{cor}

Let $G$ be a triangle- and quadrangle-free graph with $n$ vertices, $m$ edges, and radius $r$. Then

\begin{equation*}
C(G) \le \frac{n(2n - r) + 4m}{8}
\end{equation*}
with equality if the graph $G$ is a Moore graph of diameter 2 or a cycle of length 6, $C_6$.

\end{cor}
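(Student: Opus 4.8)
The plan is to obtain this sharp upper bound by feeding Yamaguchi's estimate on the first Zagreb index into the closeness bound already established for triangle- and quadrangle-free graphs. Since $G$ is triangle- and quadrangle-free, Theorem \ref{T:TQ} applies and gives the upper bound $C(G) \le \frac{n(n-1) + M_1 + 4m}{8}$, where $M_1$ is the first Zagreb index of $G$. The only quantity in this expression not already fixed by $n$ and $m$ is $M_1$, so the strategy is to bound $M_1$ from above in terms of $n$ and $r$.

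First I would invoke Theorem \ref{T:Yamaguchi}: as $G$ is a triangle- and quadrangle-free connected graph with radius $r$, we have $M_1(G) \le n(n+1-r)$. Substituting this into the displayed upper bound and combining the two $n$-multiples in the numerator, namely $n(n-1) + n(n+1-r) = n(2n-r)$, collapses the numerator to $n(2n-r) + 4m$. Dividing by $8$ gives exactly $C(G) \le \frac{n(2n-r)+4m}{8}$, which is the claimed inequality; this portion is a routine substitution once the two theorems are in place.

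The more delicate point is the equality case, since the bound is a composition of two separate inequalities and equality in the corollary requires both to be tight simultaneously. I would argue as follows. By Theorem \ref{T:Yamaguchi}, equality $M_1 = n(n+1-r)$ holds if and only if $G$ is a Moore graph of diameter $2$ or $G \cong C_6$. Separately, by Theorem \ref{T:TQ} the upper bound for $C(G)$ is attained whenever the diameter $d$ satisfies $d \le 3$. It therefore suffices to check that each graph attaining Yamaguchi equality also has diameter at most $3$: a Moore graph of diameter $2$ has $d = 2 \le 3$, and $C_6$ has $d = 3 \le 3$. In both cases both inequalities are tight, so the corollary's bound is attained.

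The main obstacle is precisely this compatibility check of the two equality conditions; the algebra is immediate, but one must confirm that the diameter constraint required for tightness in Theorem \ref{T:TQ} is not violated by the extremal graphs of Theorem \ref{T:Yamaguchi}. Since both extremal families, Moore graphs of diameter $2$ and $C_6$, have diameter at most $3$, no conflict arises, and the equality claim follows.
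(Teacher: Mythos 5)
Your proposal is correct and follows exactly the route the paper intends: the paper's proof is the single line that the corollary follows from Theorem \ref{T:TQ} and Theorem \ref{T:Yamaguchi}, which is precisely your substitution of $M_1 \le n(n+1-r)$ into the bound $C(G) \le \frac{n(n-1)+M_1+4m}{8}$. Your explicit verification that the extremal graphs of Theorem \ref{T:Yamaguchi} (Moore graphs of diameter $2$ and $C_6$) satisfy the diameter condition $d \le 3$ needed for tightness in Theorem \ref{T:TQ} is a detail the paper leaves implicit, and it is handled correctly.
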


\begin{proof}

This follows from Theorem \ref{T:TQ} and Theorem \ref{T:Yamaguchi}.

\end{proof}

In the following theorem, we will determine bounds for the generalized closeness, $GC(G)$, and the closeness, $C(G)$, for any graph $G$ that is a tree or has a girth (the length of the shortest cycle in $G$), $g(G) \ge 7$. This will provide formulas for the generalized closeness, $GC(G)$, and the closeness, $C(G)$, for any such graph if the diameter $d$ is less than or equal to 4.

\begin{theorem}\label{T:Zagreb}

Let $G$ be a tree or a connected graph with girth $g(G) \ge 7$. Let the graph $G$ have $n$ vertices, $m$ edges, and diameter $d$. Let $M_1$ and $M_2$ be the Zagreb indices of the graph $G$. Let
\begin{equation*}
L= \alpha^d( n(n-1)+ M_1 - 2m -2M_2) + 2\alpha^3(M_2 + m)+\alpha^2M_1(1-2\alpha) + 2m\alpha(1 - \alpha)
\end{equation*}
and
\begin{equation*}
U= \alpha^4( n(n-1)+ M_1 - 2m -2M_2) + 2\alpha^3(M_2 + m)+\alpha^2M_1(1-2\alpha) + 2m\alpha(1 - \alpha)
\end{equation*}
Then for the generalized closeness, $GC(G)$, we have
\begin{equation*}
L \le GC(G) \le U.
\end{equation*}
For the closeness, $C(G)$, we have
\begin{equation*}
\frac{ n(n-1)+ M_1 - 2m -2M_2}{2^d} + \frac{M_2 + 3m}{4} \le C(G) \le \frac{n(n - 1) + M_1 +2M_2 + 10m}{16}
\end{equation*}
where for each formula, equality holds if the diameter $d$ is less than or equal to 4.

\end{theorem}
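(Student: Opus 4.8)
The plan is to follow the same template as the proof of Theorem~\ref{T:TQ}, now exploiting the \emph{exact} values of the distance distribution at distances $1$, $2$, and $3$. I would begin from
\[
GC(G) = 2\sum_{i<j} \alpha^{d(i,j)} = 2\sum_{k\ge 1} d(G,k)\,\alpha^{k}.
\]
First I would note that the hypotheses furnish three exact counts. A tree has no cycles, and a graph with $g(G)\ge 7$ has no cycle of length $3$ or $4$; in either case $G$ is triangle- and quadrangle-free, so $d(G,1)=m$ and, by the Gutman--Das result, $d(G,2)=0.5M_1-m$. Crucially, the conditions ``$G$ is a tree or $g(G)\ge 7$'' are exactly the equality case of the Liu--Liu bound, so $d(G,3)=W_P(G)=M_2-M_1+m$.

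With these three counts in hand I would obtain the number of pairs at distance at least $4$ by subtracting from the total $\frac{n(n-1)}{2}$, giving
\[
\sum_{k\ge 4} d(G,k)=\frac{1}{2}\bigl(n(n-1)+M_1-2m-2M_2\bigr).
\]
Then, exactly as in Theorem~\ref{T:TQ}, I would sandwich this tail: since $0<\alpha<1$ and every remaining pair satisfies $4\le d(i,j)\le d$, we have $\alpha^{d}\le \alpha^{d(i,j)}\le \alpha^{4}$, so the tail sum lies between $\alpha^{d}$ and $\alpha^{4}$ times the count above. Substituting the three exact counts together with these tail bounds into the expression for $GC(G)$ and collecting terms produces $L\le GC(G)\le U$.

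The equality assertion and the closeness specialization are then immediate. If $d\le 3$ there are no pairs at distance $\ge 4$ (equivalently $n(n-1)+M_1-2m-2M_2=0$), so the tail contributes $0$ to both bounds and $L=U$; if $d=4$ every remaining pair is at distance exactly $4$, whence $\alpha^{d}=\alpha^{4}$ and again $L=U=GC(G)$. Setting $\alpha=0.5$ in $L$ and $U$, and noting that the term $\alpha^{2}M_1(1-2\alpha)$ then vanishes, yields the two stated bounds for $C(G)$.

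The computations themselves are routine; the only genuine content is recognizing that the stated hypotheses simultaneously trigger the Gutman--Das identity for $d(G,2)$ and, more importantly, the \emph{equality} case of the Liu--Liu inequality for $d(G,3)$. Accordingly, the main point requiring care is the algebraic bookkeeping needed to reconcile the collected expression with the particular grouping $2\alpha^{3}(M_2+m)+\alpha^{2}M_1(1-2\alpha)+2m\alpha(1-\alpha)$ exhibited in $L$ and $U$.
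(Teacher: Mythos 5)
Your proposal is correct and follows essentially the same route as the paper's proof: the same distance-count decomposition using the Gutman--Das identity $d(G,2)=0.5M_1-m$ and the equality case of the Liu--Liu bound $d(G,3)=M_2-M_1+m$, the same $\alpha^{d}\le\alpha^{d(i,j)}\le\alpha^{4}$ sandwich on the tail, and the same specialization $\alpha=0.5$ for closeness. Your explicit justification that the tail count vanishes when $d\le 3$ is slightly more detailed than the paper's brief equality remark, but the argument is the same.
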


\begin{proof}

Let $d(G, k)$ be the number of pairs of vertices at distance $k$, $k>0$. Then, by Theorem 2.7 of Liu, Liu \cite{liu2011wiener} we have

\begin{equation*}
d(G, 3) = M_2 -M_1 + m.
\end{equation*}
By Theorem (2.6) we also have $d(G, 2)= 0.5M_1 - m$. We will use this to obtain the bounds as follows. We have
\begin{equation*}
GC(G) = 2[\alpha m + \alpha^2(0.5M_1 - m) + \alpha^3(M_2 - M_1 + m) + \sum_{i<j, d(i,j)>3} \alpha^ {d(i,j)}]
\end{equation*}
Since $0 < \alpha < 1$, we obtain
\begin{align*}
GC(G) \ge 2[\alpha m + \alpha^2(0.5M_1 - m) &+ \alpha^3(M_2 - M_1 + m) +\\
&\alpha^d( 0.5n(n-1)+ 0.5M_1 - m -M_2)].
\end{align*}
Simplifying this gives the lower bound $L$. Similarly, we have
\begin{align*}
GC(G) \le 2[\alpha m + \alpha^2(0.5M_1 - m) &+ \alpha^3(M_2 - M_1 + m)  +\\
&\alpha^4( 0.5n(n-1)+ 0.5M_1 - m -M_2)].
\end{align*}
Simplifying this gives the upper bound $U$. Furthermore, it is readily apparent that equality for both bounds occurs if the diameter $d$ is less than or equal to 4. Also, the result for closeness follows by setting $\alpha = 0.5$.

\end{proof}

In Theorem 9, Dangalchev C. \cite{dangalchev2023closeness} presented a formula for closeness in bistar graphs. Furthermore, Hande Tuncel Golpek and Aysun Aytac \cite{golpek2023computing} also presented a formula for bistar graphs. Furtula-Gutman-Ediz \cite{furtula2014difference} defined the following class of trees, which is a generalization of bistar graphs.

\begin{definition}

Let D be a positive integer, $D\ge 2$. Let $S_{D+1}$ be the star on D+1 vertices, and let $v_1, v_2, ..., v_D$ be its pendent vertices. For $i = 1, 2, ..., D$ let $r_i$ be non-negative integers such that $r_1 \ge r_2 \ge ... \ge r_D$. Construct the tree $T(r_1\, r_2, ..., r_D)$ by attaching $r_i$ pendent vertices to the vertex $v_i$ of $S_{D+1}$, and by doing this for $i = 1, 2, ..., D$. The tree $T(r_1\, r_2, ..., r_D)$ has $ n = 1 + D + \sum_{i=1}^D r_i$ vertices. For given values of $D\ge 2$ and $n\ge D + 1$, the set of all trees $T(r_1\, r_2, ..., r_D)$ constructed in the above manner is denoted by $T(n, D)$.

\end{definition}

\begin{example}

Figures \ref{fig:1} and \ref{fig:2} show trees $T(n, D)$ from Furtula-Gutman-Ediz \cite{furtula2014difference}, where $n = 10$ and $D = 4$. In Figure 1, $ r_1 = 5$ and $r_2 = r_3 = r_4 = 0$. It is a bistar graph with diameter 3. The closeness is 23.25. In Figure 2, $ r_1 = 4, r_2 = 1$ and $r_3 = r_4 = 0$. The diameter is 4 and the closeness is 21.75.

\end{example}

\begin{figure}[h]
\centering
\begin{minipage}{.45\textwidth}
  \centering
  \begin{tikzpicture}[scale=.6,thick, vertex/.style={scale=.6, ball color=black, circle, text=white, minimum size=.2mm},
Ledge/.style={to path={
.. controls +(45:2) and +(135:2).. (\tikztotarget) \tikztonodes}}
]
\node [ label={[label distance=.1cm]90:$ $}]  (1) at (0, 0) [vertex] {};
\node [ label={[label distance=.1cm]90:$ $}]  (2) at (-1, 1) [vertex] {};
\node [ label={[label distance=.1cm]90:$ $}]  (4) at (1, 1) [vertex] {};
\node [ label={[label distance=.1cm]90:$ $}]  (6) at (0, 1) [vertex] {};
\node [ label={[label distance=.1cm]90:$ $}]  (7) at (0, 2) [vertex] {};
\node [ label={[label distance=.1cm]90:$ $}]  (8) at (-.6, 3) [vertex] {};
\node [ label={[label distance=.1cm]90:$ $}]  (9) at (0, 3) [vertex] {};
\node [ label={[label distance=.1cm]90:$ $}]  (10) at (.6, 3) [vertex] {};
\node [ label={[label distance=.1cm]90:$ $}]  (11) at (-1.4, 3) [vertex] {};
\node [ label={[label distance=.1cm]90:$ $}]  (12) at (-.6, 3) [vertex] {};
\node [ label={[label distance=.1cm]90:$ $}]  (13) at (1.4, 3) [vertex] {};
\draw (6) -- (2);
\draw (6) -- (4);
\draw (1) -- (6);
\draw (7) -- (6);
\draw (7) -- (8);
\draw (9) -- (7);
\draw (10) -- (7);
\draw (11) -- (7);
\draw (12) -- (7);
\draw (13) -- (7);
\end{tikzpicture}
\caption{}
  \label{fig:1}
\end{minipage}%
\begin{minipage}{.45\textwidth}
  \centering
  \begin{tikzpicture}[scale=.6,thick, vertex/.style={scale=.6, ball color=black, circle, text=white, minimum size=.2mm},
Ledge/.style={to path={
.. controls +(45:2) and +(135:2).. (\tikztotarget) \tikztonodes}}
]
\node [ label={[label distance=.1cm]90:$ $}]  (16) at (5, -1) [vertex] {};
\node [ label={[label distance=.1cm]90:$ $}]  (17) at (5, 0) [vertex] {};
\node [ label={[label distance=.1cm]90:$ $}]  (18) at (4, 1) [vertex] {};
\node [ label={[label distance=.1cm]90:$ $}]  (20) at (6, 1) [vertex] {};
\node [ label={[label distance=.1cm]90:$ $}]  (22) at (5, 1) [vertex] {};
\node [ label={[label distance=.1cm]90:$ $}]  (23) at (5, 2) [vertex] {};
\node [ label={[label distance=.1cm]90:$ $}]  (24) at (4.4, 3) [vertex] {};
\node [ label={[label distance=.1cm]90:$ $}]  (27) at (3.6, 3) [vertex] {};
\node [ label={[label distance=.1cm]90:$ $}]  (29) at (6.4, 3) [vertex] {};
\node [ label={[label distance=.1cm]90:$ $}]  (30) at (5.6, 3) [vertex] {};
\node [ label={[label distance=.1cm]90:$ $}]  (34) at (5.6, 3) [vertex] {};

\draw (16) -- (17);
\draw (22) -- (18);
\draw (22) -- (20);
\draw (17) -- (22);
\draw (23) -- (22);
\draw (23) -- (24);
\draw (27) -- (23);
\draw (29) -- (23);
\draw (30) -- (23);
\end{tikzpicture}
\caption{}
  \label{fig:2}
\end{minipage}
\end{figure}

In the following corollary, we will present formulas for generalized closeness and closeness for trees in $T(n, D)$.

\begin{cor}

Let $G$ be a tree $T(r_1, r_2, ..., r_D) \in T(n, D)$ with $n > D + 1$ vertices. Let $M_1$ and $M_2$ be the Zagreb indices of the graph $G$. If $r_2 = ... = r_D = 0$, then the generalized closeness is given by
\begin{equation*}
GC(G) = 2\alpha^3(D - 1)(n - D - 1) + \alpha^2M_1 + 2(n - 1)\alpha(1 - \alpha)
\end{equation*}
and the closeness is given by
\begin{equation*}
C(G) = \frac{(D - 1)(n - D - 1) + M_1 + 2(n - 1)}{4}.
\end{equation*}

If $r_1, r_2 > 0$, then the generalized closeness is given by
\begin{align*}
GC(G) = \alpha^4[n(n - 1) - M_1] + 2\alpha^3(D - 1)(n& - D - 1)(1 - \alpha) +\\
&\alpha^2M_1 + 2(n - 1)\alpha(1 - \alpha)
\end{align*}
and the closeness is given by
\begin{equation*}
C(G) = \frac{(n - 1)(n + 8) + 2(D - 1)(n - D - 1) + 3M_1}{16}.
\end{equation*}
\end{cor}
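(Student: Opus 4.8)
The plan is to recognize each family as a tree --- hence triangle- and quadrangle-free with $m=n-1$ edges and (vacuously) girth at least $7$ --- so that Theorem \ref{T:TQ} and Theorem \ref{T:Zagreb} apply directly, and then to pin down the diameter in each case so the relevant equality is licensed. First I would fix notation for the vertex set: the center $c$ of $S_{D+1}$, its neighbors $v_1,\dots,v_D$, and the pendants attached to each $v_i$. A short distance analysis shows that $d(v_i,w)=3$ whenever $w$ is a pendant hanging off some $v_k$ with $k\neq i$, and $d(w,w')=4$ whenever $w,w'$ are pendants on distinct $v_k$'s, with no larger distances occurring. When $r_2=\cdots=r_D=0$ only $v_1$ carries pendants, so no distance-$4$ pair exists and the diameter is $3$; when $r_1,r_2>0$ the pendants on $v_1$ and $v_2$ realize distance $4$, so the diameter is $4$.

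With the diameters fixed, the first case is governed by the equality clause of Theorem \ref{T:TQ} (valid for $d\le 3$) and the second by the equality clause of Theorem \ref{T:Zagreb} (valid for $d\le 4$). The one structural input I need beyond these theorems is the number of pairs at distance $3$, which I would count directly: such a pair is exactly a pair $\{v_i,w\}$ with $w$ a pendant on some $v_k$, $k\neq i$, and since each of the $n-1-D$ pendants admits $D-1$ choices of $v_i$, we get
\[
d(G,3)=(D-1)(n-D-1).
\]
This agrees with the Liu--Liu identity $d(G,3)=M_2-M_1+m$, which I will use in the second case to eliminate $M_2$ via $M_2=M_1-(n-1)+(D-1)(n-D-1)$.

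For the first case it then remains to substitute $m=n-1$ into the closed form of Theorem \ref{T:TQ}; here $\tfrac{1}{2}(n(n-1)-M_1)$ is the number of pairs at distance at least $3$, which equals $(D-1)(n-D-1)$ because $d\le 3$, so the $\alpha^3$-term becomes $2\alpha^3(D-1)(n-D-1)$ and the stated formula drops out, with closeness obtained by setting $\alpha=1/2$. For the second case I would substitute $m=n-1$ and the above expression for $M_2$ into the closed form of Theorem \ref{T:Zagreb}: the $\alpha$- and $\alpha^2$-terms collapse at once to $\alpha^2 M_1 + 2(n-1)\alpha(1-\alpha)$, while the delicate step is the regrouping of the $\alpha^3$- and $\alpha^4$-contributions. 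After eliminating $M_2$ the $\alpha^4$-coefficient is $n(n-1)-M_1-2(D-1)(n-D-1)$, and combining the surplus $-2\alpha^4(D-1)(n-D-1)$ with the $\alpha^3$-term $2\alpha^3(D-1)(n-D-1)$ yields the factored middle term $2\alpha^3(D-1)(n-D-1)(1-\alpha)$, leaving the clean $\alpha^4\bigl[n(n-1)-M_1\bigr]$.

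I expect the main obstacle to be exactly this last regrouping together with justifying the diameter claims: one must verify that pendant-to-pendant pairs across distinct $v_k$'s are genuinely at distance $4$ (and that nothing is farther) so that Theorem \ref{T:Zagreb}'s equality is available, and then track the $(1-\alpha)$ factor correctly when merging the $\alpha^3$ and $\alpha^4$ terms. Setting $\alpha=1/2$ in each simplified identity finishes the two closeness formulas.
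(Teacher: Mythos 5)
Your proof is correct and arrives at exactly the stated formulas, but it takes a more self-contained route than the paper. The paper's proof is essentially a citation: it invokes Furtula--Gutman--Ediz \cite{furtula2014difference} both for the diameter facts (diameter $3$ when $r_2=\cdots=r_D=0$, diameter $4$ when $r_1,r_2>0$) and for the identity $RM_2(G)=M_2-M_1+n-1=(D-1)(n-D-1)$, and then substitutes these into the equality case of Theorem \ref{T:Zagreb} for both cases. You instead derive the needed inputs from the distance structure of $T(r_1,\dots,r_D)$: you verify the two diameter claims directly, count $d(G,3)=(D-1)(n-D-1)$ by hand, and then use the Liu--Liu identity $d(G,3)=M_2-M_1+m$ to eliminate $M_2$ --- which amounts to a re-derivation of the very $RM_2$ identity the paper quotes. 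You also route the first case through Theorem \ref{T:TQ} (equality for $d\le 3$) rather than Theorem \ref{T:Zagreb}; this is an immaterial difference, since for a diameter-$3$ tree both theorems collapse to the same expression $\alpha^3\bigl[n(n-1)-M_1\bigr]+\alpha^2M_1+2m\alpha(1-\alpha)$, with $n(n-1)-M_1=2(D-1)(n-D-1)$ because every pair at distance at least $3$ is then at distance exactly $3$. What your version buys is self-containedness and transparency: the hypotheses licensing the equality clauses are actually verified rather than outsourced, so the corollary no longer leans on \cite{furtula2014difference} at all. What the paper's version buys is brevity. Your final regrouping, combining $-2\alpha^4(D-1)(n-D-1)$ with $2\alpha^3(D-1)(n-D-1)$ into $2\alpha^3(D-1)(n-D-1)(1-\alpha)$ after the $M_1$ cancellation in the $\alpha^3$-terms, is precisely the algebra the paper's substitution also requires, and you carry it out correctly.
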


\begin{proof}

Furtula, Gutman, and Ediz \cite{furtula2014difference} demonstrated that if $r_2 = ... = r_D = 0$, then the diameter of graph $G$ is 3, and if $r_1, r_2 > 0$, then the diameter of graph $G$ is 4. Furthermore, they showed that the reduced second Zagreb index $RM_2(G) = M_2 - M_1 + n - 1 = (D - 1)(n - D - 1)$. Using this and Theorem \ref{T:Zagreb}, we obtain the results.

\end{proof}

\section{Discussion and Conclusion}
Graph theory offers robust tools for addressing diverse challenges, including the stability assessment of communication networks. Our primary research objective is the computation of various graph vulnerability measures. This paper presents bounds and formulas for the vulnerability measures, closeness and generalized closeness, across various graph types. Specifically, we derive formulas for triangle- and quadrangle-free graphs with diameters of four or less, and certain specialized trees, utilizing Zagreb indices—widely applied in chemical applications. 

The fields of communication network modeling and chemical graph theory present significant research opportunities. Given the numerous vulnerability measures and topological indices, the derivation of bounds and formulas for these remains an active area of research, crucial for network stability assessment and the modeling of diverse chemical compound properties.

\bibliographystyle{vancouver}
\bibliography{VancouverExamples.bib}

%
% ---- Bibliography ----
%
%\begin{thebibliography}{9}
%

%\bibitem {Bruc:Dos:Gra:Gut}
%Bruckler F.M., Doslic T., Graovac A., Gutman I., “On a class of Distance-Based Molecular Structure Descriptors”, Chem. Phys. Lett. 503 (2011) 336-338.

%\bibitem {Dan}
%Dangalchev C., “Residual Closeness in Networks”, Physica A 365 (2006) 556 – 564. 

%\bibitem {Dan}
%Dangalchev C., “Residual Closeness and Generalized Closeness”, International Journal of Foundations of Computer Science vol 22 No. 8 (2011) 1939 – 1948.

%\bibitem {Fur:Gut:Edi}
%Furtula B., Gutman I, Ediz S., "On difference of Zagreb indices", Discrete Applied Mathematics 178 (2014) 83 - 88.

%\bibitem {Gut:Tri}
%Gutman I., Trinajstic N., “Graph Theory and molecular orbitals, Total pie electron energy of alternant hydrocarbons”, Chem.Phys Lett. 17 (1972) 535 – 538.

%\bibitem {Gut}
%Gutman I., "A property of Wiener number and its modifications", Indian Journal of Chemistry, 36 A (Feb 1997) 128 - 132.

%bibitem {Nik:Kov:Mil:Trii}
%Nikolic S., Kovacevic G.< Milicevic A., Trinajstic N., "The Zagreb indices 30 years after", Croat. Chem. Acta 76 (2003) 113 - 124.

%\bibitem {Tod:Con}
%Todeschini R., Consonni V., "Molecular Descriptors for Chemoinformatics", Wiley- VCH, Wienheim, 2009, vol 1 and vol. 2.

%\bibitem {Wiel}
%Wiener H., “Structural Determination of paraffin boiling p%oints”, Journal of Am. Chem. Soc. 69 (1947) 17 – 20. 

%\end{thebibliography}
\end{document}